\theoremstyle{plain}
\newtheorem{theorem}{Theorem}
\newtheorem{e-proposition}{Proposition}
\newtheorem{conjecture}{Conjecture}
\newtheorem{question}{Question}
\newcommand{\Hy}{\mathbb{H}^2}
\newcommand{\Q}{\mathbb{Q}}
\newcommand{\R}{\mathbb{R}}
\newcommand{\SLZ}{\mathrm{SL}_2(\Z)}
\newcommand{\SLQ}{\mathrm{SL}_2(\Q)}
\newcommand{\tr}{\mathrm{tr}}
\newcommand{\T}{\mathbb{T}}
\newcommand{\U}{\mathrm{T}^1}
\newcommand{\Z}{\mathbb{Z}}
\begin{document}

\title{Almost commensurability of 3-dimensional Anosov flows}
\author{Pierre Dehornoy}
\thanks{
I thank \'Etienne Ghys for raising the questions discussed in this note and for several related conversations, and Wang Shicheng for pointing out Proposition~\ref{SWW} to me when I was visiting Beijing University. \\Supported by SNF project 137548 \emph{Knots and surfaces}.
}

\begin{abstract}
Two flows are \emph{topologically almost commensurable} if, up to removing finitely many periodic orbits and taking finite coverings, they are topologically equivalent. We prove that all suspensions of automorphisms of the 2-dimensional torus and all geodesic flows on unit tangent bundles to hyperbolic 2-orbifolds are pairwise almost commensurable.
\end{abstract}


\maketitle


\subsection{Topological equivalence for Anosov flows}
A general problem in dynamical systems is to classify flows up to \emph{topological equivalence}, that is, up to applying an homeomorphism of the underlying space that maps orbits onto orbits~\footnote{We consider in this note homeomorphisms and covering maps, so that the manifold structure is of importance. For a weaker notion and related results on hyperbolic flows, see M. Boyle's article~\cite{Boyle}.}.

Of special interest are the \emph{Anosov flows} which are, in some sense, the simplest chaotic systems~\cite{Anosov, Hadamard}. 
By definition, a flow is of Anosov type if there exists an invariant splitting of the tangent bundle of the underlying manifold into a Whitney sum of three bundles: the bundle generated by the direction of the flow, a contracting bundle along which (the differential of) the flow is uniformly contracting, and an expanding bundle along which the flow is uniformly expanding.  
In the 3-dimensional case, all three bundles have to be 1-dimensional.

There are two main examples of 3-dimensional Anosov flows, namely the vertical flow on the suspension of a hyperbolic automorphism of the torus (that is, the flow~$\partial/\partial t$ on $M_A:=\T^2\times[0,1]/_{(x,1)\sim(Ax,0)}$ where $A$ is an element of~$\SLZ$ satisfying~$\tr(A)>2$) and the geodesic flow on the unit tangent bundle to a hyperbolic 2-orbifold (that is, on~$\U\Hy/G$ for $G$ a Fuchsian group, the flow whose orbits are of the form~$(\gamma(t), \dot\gamma(t))$ for $\gamma$ a geodesic). 
For these flows, the question of topological equivalence can be answered completely: a suspension and a geodesic flow are never equivalent, the suspensions of two automorphisms are equivalent if and only if the associated matrices are conjugated in~$\SLZ$, and two geodesic flows are equivalent if and only if the underlying 2-orbifolds are of the same type (the latter statement is not obvious, it follows from the structural stability of the geodesic flow in negative curvature and from the convexity of the space of negatively curved metrics, see Ghys' article~\cite{GhysConjugues}).

\subsection{Topological almost equivalence and Birkhoff sections}

Suspensions and geodesic flows can be connected if one considers a weaker notion: two flows $\phi, \phi'$ on two manifolds $M, M'$ are \emph{topologically almost equivalent} if there exists a finite collection~$\Gamma$ (\emph{resp.} $\Gamma'$) of periodic orbits of $\phi$ (\emph{resp.} $\phi'$) such that $\phi\vert_{M\setminus\Gamma}$ is topologically equivalent to $\phi'\vert_{M'\setminus\Gamma'}$. 

G.\,Birkhoff showed~\cite{BirkhoffDS} that the geodesic flow on a hyperbolic genus~$g$ surface admits a \emph{Birkhoff section}, that is, a surface whose boundary is the union of finitely many periodic orbits and whose interior is transverse to the flow and intersects every orbit in bounded time. 
Then D.\,Fried observed~\cite{FriedAnosov} that Birkhoff's surface is of genus~1 and that the first return map is of Anosov type, which implies that the geodesic flow is almost equivalent to the suspension of some automorphism of the torus.
The latter was determined by \'E.\,Ghys~\cite{GhysGodbillon} and N.\,Hashiguchi~\cite{Hashiguchi}: it corresponds to the matrix~$\left(\begin{smallmatrix} g&g+1\\g-1&g \end{smallmatrix}\right)^2$.

In the same direction, Fried also showed~\cite{FriedAnosov} that every transitive Anosov flow admits a Birkhoff section (with no control of the genus in general) and that the associated first return map is of pseudo-Anosov type. This implies that every transitive Anosov flow is topologically almost equivalent to the suspension of some pseudo-Anosov homeomorphism. 
These observations led Fried~\cite{FriedAnosov} to ask
 
\begin{question}[Fried]
\label{Fried}
Does every transitive Anosov flow admit a genus one Birkhoff section?
\end{question}

A positive answer would imply that every transitive Anosov flow is topologically almost equivalent to the suspension of some automorphism of the torus. Some progress about this question in the case of geodesic flows was reported in~\cite{Pierre}, but no general answer is known in general. 

\subsection{Topological commensurability}
\label{Commensurability}

Topological equivalence can be weakened in another direction: two flows are called \emph{topologically commensurable} if they admit finite coverings by topologically equivalent flows. 

Since every hyperbolic 2-orbifold is finitely covered by some hyperbolic surface and since any two hyperbolic surfaces are covered by surfaces of the same genus, the geodesic flows on the unit tangent bundles of any two hyperbolic 2-orbifolds are topologically commensurable. 

For suspensions of automorphisms of the torus, there is more than one commensurability class: 

\begin{e-proposition}[Sun-Wang-Wu, \cite{SWW}]
\label{SWW}
Two hyperbolic matrices $A, B$ give rise to topologically commensurable suspensions~$M_A, M_{B}$ if and only if there exist two positive integer $i,j$ satisfying $\tr(A^i)=\tr(B^j)$.
\end{e-proposition}

\begin{proof}
We only prove the ``if'' part, and refer to Sun-Wang-Wu for the ``only if'' part, as we do not need it for our main result. First, it is clear that, if $B=A^k$ holds, then $M_{B}$ is a $k$-fold cover of~$M_A$, so that we can restrict our attention to matrices with the same trace.

Now, suppose $\tr(A)=\tr(B)$. Then, the matrices $A$ and $B$ are conjugated in~$\SLQ$, so there exists a matrix~$P$ with integer coefficients (but with determinant not necessarily equal to~$\pm1$) satisfying~$B=P^{-1}AP$. 
The canonical action of $A$ on $\R^2$ fixes the lattice~$\Z^2$, and permutes all the (finitely many) sublattices of~$\Z^2$ of any given index. 
In particular, let $\Lambda_P$ be the sublattice of~$\Z^2$ generated by~$P$. 
Then there exists an integer $k$ for which $A^k$ fixes~$\Lambda_P$, hence acts on the torus~$\Z^2/\Lambda_P$.
In the basis spanned by~$P$, this action is described by the matrix~$P^{-1}A^kP=B^k$. 
Therefore the covering~$\R^2/\Lambda_P\to\R^2/\Z^2$ induces a covering~$M_{B}\to M_{A}$ of index~$k\vert\!\det(P)\vert$. 
Moreover, the covering preserves the vertical direction.
\end{proof}

\subsection{Topological almost commensurability}

Merging the two previous weakenings of topological equivalence, one obtain a new one: two flows $\phi, \phi'$ on two manifolds $M, M'$ will be called \emph{topologically almost commensurable} if there exists a finite collection~$\Gamma$ (\emph{resp.} $\Gamma'$) of periodic orbits of $\phi$ (\emph{resp.} $\phi'$) such that $\phi\vert_{M\setminus\Gamma}$ is topologically commensurable to $\phi'\vert_{M'\setminus\Gamma'}$. 
Topological almost commensurability is an equivalence relation. 
The topological commensurability of geodesic flows and the construction of Birkhoff and Fried led Ghys to propose

\begin{conjecture}[Ghys]
\label{Ghys}
Any two transitive Anosov flows are topologically almost commensurable.
\end{conjecture}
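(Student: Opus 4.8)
The plan is to deduce Conjecture~\ref{Ghys} from the almost commensurability of all torus suspensions and geodesic flows proved in this note, handling the general flow through Fried's Birkhoff section. By Fried's theorem recalled above, every transitive Anosov flow $\phi$ on a closed $3$-manifold $M$ admits a Birkhoff section $S$ whose first-return map $f\colon S\to S$ is pseudo-Anosov; if $\Gamma$ denotes the finite family of boundary orbits, then $\phi\vert_{M\setminus\Gamma}$ is topologically equivalent to the suspension flow of $f$. Since almost commensurability is an equivalence relation and since all torus suspensions and all geodesic flows already constitute a single class, it is enough to show that the suspension of an arbitrary pseudo-Anosov homeomorphism is almost commensurable to the suspension of some hyperbolic element of $\SLZ$. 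At the outset one should observe that a positive answer to Fried's Question~\ref{Fried} would settle this instantly: a genus one Birkhoff section makes the flow almost equivalent to a torus suspension, its first-return map being a hyperbolic automorphism of the torus, and almost equivalence implies almost commensurability. The aim of the plan below is to reach the weaker commensurability conclusion while exploiting the additional freedom of finite coverings, which one may hope is enough to avoid resolving Fried's Question in full strength.

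First I would remove the singularities. A pseudo-Anosov map has finitely many prong singularities, whose orbits form a finite collection $\Gamma_{\mathrm{sing}}$ of periodic orbits; after adjoining these to the removed set and passing to the finite cover on which both invariant foliations become transversely orientable, one is left with a flow on an open $3$-manifold whose weak-stable and weak-unstable foliations are honest nonsingular coorientable foliations. This drilling is, heuristically, exactly what should make the conjecture accessible. The closed ambient manifolds of the three families carry the three rigidly distinct geometries $\mathrm{Sol}$ (torus suspensions), $\widetilde{\mathrm{SL}}_2(\R)$ (geodesic flows), and $\mathbb{H}^3$ (generic pseudo-Anosov suspensions, by Thurston), so that members of different families are never homeomorphic; but once finitely many periodic orbits are drilled, all three become cusped manifolds, among which these rigid distinctions collapse and commensurabilities can genuinely appear. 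The same collapse is what already allows the present note to merge the infinitely many distinct commensurability classes of torus suspensions detected by Proposition~\ref{SWW} into one almost commensurability class.

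The crux is then to produce, for this regularized open flow, a Birkhoff section whose first-return map becomes a hyperbolic torus automorphism after a further finite cover and the removal of finitely many additional orbits. Here I would imitate the phenomenon responsible for the merging just mentioned: drilling a periodic orbit opens up a whole fibered cone of the Thurston norm in the drilled manifold, and re-choosing the Birkhoff section amounts to sliding the monodromy across this cone, along which the dilatation varies continuously (Fried). The hope is that within this cone one can reach a fibration whose fiber is a torus and whose monodromy is Anosov, so that, up to a finite cover making the foliations orientable, the first-return map is a hyperbolic element of $\SLZ$; the main result of this note then closes the loop.

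The main obstacle is precisely this last step, and I expect it to be the genuinely open heart of the conjecture. One must control the isotopy class of the monodromy along the fibered cone finely enough to force a torus fiber carrying an Anosov return map, whereas a priori the dilatation of $f$ is an arbitrary bi-Perron unit of high algebraic degree with genuine singularity data, against the quadratic, singularity-free dilatation of a torus automorphism. The analogy with the erasure of the trace invariant of Proposition~\ref{SWW} strongly suggests that drilling together with finite coverings should suffice to bridge these, but turning this analogy into a proof---equivalently, showing that every transitive Anosov flow acquires a genus one Birkhoff section after drilling and covering---appears to be essentially as hard as Fried's Question~\ref{Fried} itself.
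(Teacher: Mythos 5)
There is a genuine gap here, and it is one you yourself flag: the statement you are asked to prove is stated in the paper as an open \emph{conjecture}. The paper offers no proof of it; it proves only the special case that torus suspensions and geodesic flows form a single almost commensurability class, and then remarks that the full conjecture would follow from a positive answer to Fried's Question~\ref{Fried}. Your proposal reproduces exactly this reduction (via Fried's Birkhoff section, every transitive Anosov flow is almost equivalent to a pseudo-Anosov suspension, so it suffices to connect pseudo-Anosov suspensions to torus suspensions), but the decisive step --- producing, after drilling and finite covers, a genus one Birkhoff section with Anosov return map --- is precisely the open heart of the problem, and your sketch does not close it. A proposal that ends with ``this step appears to be essentially as hard as Fried's Question itself'' is an accurate assessment of the state of the art, not a proof.

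Two concrete reasons the sketched mechanism cannot work as stated. First, Fried's continuity of the dilatation over a fibered cone gives no control on the topology of the fiber: the Euler characteristic of fibers is governed by the Thurston norm, which is linear on the cone, and nothing forces the cone (let alone the subcone of classes transverse to the flow, which is what a Birkhoff section requires) to contain a norm-one class, i.e.\ a once-punctured torus. Moreover, your phrase ``a fibration whose fiber is a torus'' is literally impossible once the drilled manifold is hyperbolic, since closed torus bundles carry $\mathrm{Sol}$ geometry; what one needs is a punctured torus fiber with boundary on the drilled orbits, which is exactly a genus one Birkhoff section, so the reformulation buys nothing. Second, your heuristic that drilling makes the three geometries ``collapse'' so that ``commensurabilities can genuinely appear'' concerns homeomorphism type only: cusped hyperbolic $3$-manifolds remain commensurability-rigid by Mostow--Prasad, and, more importantly, topological commensurability of \emph{flows} demands orbit-preserving homeomorphisms of covers, a far stronger constraint than commensurability of the underlying manifolds. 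The merging of the trace classes of Proposition~\ref{SWW} achieved in the paper works because explicit genus one Birkhoff sections with computable return maps $\left(\begin{smallmatrix} 0&1\\-1&t \end{smallmatrix}\right)$ are available for the orbifolds $\Hy/G_{2,3,t+4}$; no analogue of this input exists for a general pseudo-Anosov suspension, which is why Conjecture~\ref{Ghys} remains open.
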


Our main observation here is that the results of~\cite{Pierre} can be used to provide a proof in the case of geodesic flows and suspensions of the torus: 

\begin{theorem}
All suspensions of automorphisms of the 2-torus and all geodesic flows on unit tangent bundles to hyperbolic 2-orbifolds are pairwise topologically almost commensurable.
\end{theorem}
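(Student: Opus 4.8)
The plan is to establish almost commensurability as an equivalence relation connecting three families of flows: suspensions of torus automorphisms, geodesic flows on hyperbolic surfaces, and geodesic flows on hyperbolic orbifolds. Since almost commensurability is transitive, it suffices to connect representatives across families and then show each family is internally connected. The backbone of the argument should be the results cited in the excerpt, assembled in the right order.

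\medskip

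First I would handle the geodesic flows. By the remarks in Section~\ref{Commensurability}, every hyperbolic 2-orbifold is finitely covered by a hyperbolic surface, and any two hyperbolic surfaces are covered by surfaces of a common genus; hence all geodesic flows on unit tangent bundles to hyperbolic 2-orbifolds are pairwise topologically commensurable, and \emph{a fortiori} almost commensurable. This collapses the entire orbifold family to a single almost-commensurability class, so it is enough to connect \emph{one} geodesic flow to the suspension family. Next I would recall the Birkhoff--Fried bridge: the geodesic flow on a hyperbolic genus~$g$ surface admits a genus one Birkhoff section whose first return map is Anosov, so by Fried's observation this geodesic flow is topologically almost equivalent to the suspension~$M_A$ with $A=\left(\begin{smallmatrix} g&g+1\\g-1&g \end{smallmatrix}\right)^2$. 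This single almost equivalence, combined with the previous step, places every geodesic flow in the same almost-commensurability class as that particular suspension.

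\medskip

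It then remains to show that \emph{all} suspensions of hyperbolic torus automorphisms lie in one almost-commensurability class, and in particular contain the distinguished matrix above. Here Proposition~\ref{SWW} alone is insufficient, since by its trace criterion the suspensions split into infinitely many genuine \emph{commensurability} classes; the extra flexibility must come from removing periodic orbits. This is precisely where the results of~\cite{Pierre} enter: I expect that~\cite{Pierre} supplies, for the geodesic flow, Birkhoff sections realizing first return maps of \emph{prescribed} trace (or at least enough traces to meet the criterion of Proposition~\ref{SWW}), so that after deleting the finitely many boundary orbits one can match any target suspension up to the trace/covering relation. Concretely, given an arbitrary hyperbolic matrix~$B$, the strategy is to exhibit a Birkhoff section of some geodesic flow whose return map has trace equal to $\tr(B^j)$ for some~$j$; then Proposition~\ref{SWW} makes the associated suspension commensurable to~$M_B$, and the Birkhoff construction makes it almost equivalent to the geodesic flow, chaining~$M_B$ into the common class.

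\medskip

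The main obstacle I anticipate is the last step: verifying that~\cite{Pierre} really produces Birkhoff sections hitting the needed traces. Birkhoff's classical construction yields only the specific quadratic matrix above, so the content of the theorem lies in varying the section (changing which orbits bound it, or passing to well-chosen finite covers of the surface) to realize a trace matching an arbitrary~$B$ via the criterion $\tr(A^i)=\tr(B^j)$. I would need to check that the first return maps remain of Anosov type and that the traces obtained are dense enough—ideally that every integer trace $>2$ arises—so that no suspension class is left out. Once that realization result is in hand, transitivity of almost commensurability closes the argument, since every suspension and every geodesic flow will have been shown almost commensurable to a common representative.
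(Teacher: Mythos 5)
Your proposal is correct and follows essentially the same route as the paper: the realization result you flag as the main anticipated obstacle is exactly Proposition~C of~\cite{Pierre}, which the paper cites --- for every integer $t$ the geodesic flow on the orbifold $\Hy/G_{2,3,t+4}$ (a sphere with cone points of orders $2$, $3$, $t+4$) admits a genus-one Birkhoff section with one boundary component whose first return map is conjugated to $\left(\begin{smallmatrix} 0&1\\-1&t \end{smallmatrix}\right)$, hence of trace~$t$, so every integer trace $t>2$ is realized and Proposition~\ref{SWW} (applied to $B^2$ when $\tr(B)<-2$) chains every suspension to a geodesic flow, exactly as you predicted. The only cosmetic difference is that your intermediate step through the classical Birkhoff--Fried genus-$g$ section is redundant, since the orbifold family above already supplies the bridge between the two families.
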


\begin{proof}
Let $G_{2,3,t+4}$ be the index~2 subgroup of the group generated by the symmetries along the edges of a hyperbolic triangle with angles~$\pi/2, \pi/3, \pi/(t{+}4)$. 
Then~$\Hy/G_{2,3,t+4}$ is a hyperbolic orbifold that is a sphere with three conic points of order~$2, 3$, and $t+4$ respectively.
Proposition~C of~\cite{Pierre} states that the geodesic flow on~$\U\Hy/G_{2,3,t+4}$ admits a Birkhoff section of genus one with one boundary component (depicted on Figure~\ref{F}), such that the first return map is conjugated to~$\left(\begin{smallmatrix} 0&1\\-1&t \end{smallmatrix}\right)$. 
This implies that the restriction of the geodesic flow to the complement of the boundary of the Birkhoff section is topologically equivalent to the vertical flow on the complement of one periodic orbit on~$M_{\left(\begin{smallmatrix} 0&1\\-1&t \end{smallmatrix}\right)}$. By definition, this means that the geodesic flow on~$\U\Hy/G_{2,3,t+4}$ is topologically almost equivalent to the vertical flow on~$M_{\left(\begin{smallmatrix} 0&1\\-1&t \end{smallmatrix}\right)}$.

\begin{figure}[ht]
\centering
\includegraphics*[width=.52\textwidth]{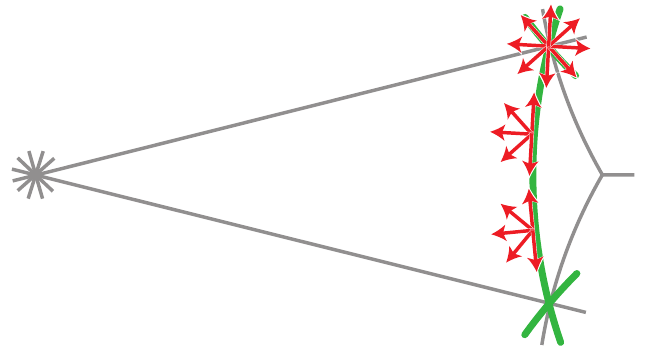}
\caption
{\small The genus one Birkhoff section for the geodesic flow on~$\U\Hy/G_{2,3,t+4}$ constructed in~\cite{Pierre} (here with $t+4=12$). A fundamental domain for~$\Hy/G_{2,3,t+4}$ is obtained by gluing two symmetric hyperbolic right-angled triangles along their longest side. The geodesic that connects the two right angles induces a closed geodesic~$\gamma$ in~$\Hy/G_{2,3,t+4}$. The Birkhoff section is made of those unit tangents to~$\gamma$ that point into one given side of~$\gamma$ (here the side of the order~$12$ vertex), plus the whole fiber of the order~2 point.}
\label{F}
\end{figure}

Now, Proposition~\ref{SWW} implies that every suspension of the torus is topologically commensurable to the suspension of~$\left(\begin{smallmatrix} 0&1\\-1&t \end{smallmatrix}\right)$ for some~$t$. 
Since any two geodesic flows are topologically commensurable, the result follows.
\end{proof}

Owing to the previous result, a proof of Ghys' conjecture would now follow from a positive answer to Fried's question. 
Nevertheless, this question seems hard for Anosov flows that are not of the type considered here. 
It looks similar to the following open question in contact geometry: ``Is every tight contact structure supported by a genus one open book?'', for which very little is known (see Etnyre and Ozbagci~\cite{Etnyre}).


\bibliographystyle{siam}

\end{document}